\newtheorem{theorem}{Theorem}[section]
\newtheorem{lemma}[theorem]{Lemma}
\newtheorem{corollary}[theorem]{Corollary}
\newtheorem{proposition}[theorem]{Proposition}
\newenvironment{proof}{\normalsize {\sc Proof}.}{{\hfill $\Box$%
 \hskip - \parfillskip\bigskip}}
\newcommand{\Res}{\mathop{\rm Res}\nolimits}
\newcommand{\Irr}{\mathop{\rm Irr}\nolimits}
\newcommand{\Aut}{\mathop{\rm Aut}\nolimits}
\newcommand{\NN} {\mathbb{N}}
\newcommand{\ZZ} {\mathbb{Z}}
\def\bigcp{\mathop{\mathchoice 
 {\hbox{\sf\Large\lower 0.1\baselineskip\hbox{Y}}}%
 {\hbox{\sf\large\lower 0.1\baselineskip\hbox{Y}}}%
 {\hbox{\sf\normalsize\lower 0.1\baselineskip\hbox{Y}}}%
 {\hbox{\sf\tiny\lower 0.1\baselineskip\hbox{Y}}}%
}}
\def\bigtimes{\mathop{\mathchoice 
 {\hbox{\sf\Large\lower 0.1\baselineskip\hbox{X}}}%
 {\hbox{\sf\large\lower 0.1\baselineskip\hbox{X}}}%
 {\hbox{\sf\normalsize\lower 0.1\baselineskip\hbox{X}}}%
 {\hbox{\sf\tiny\lower 0.1\baselineskip\hbox{X}}}%
}}
\def\Sym(#1){\mathop{\rm Sym}(#1)}
\def\Sym(#1){S_{#1}}
\def\diag(#1){\mathop{\rm diag}(#1)}
\newenvironment{enumerate*}{%
 \begin{enumerate}%
 }%
 {\end{enumerate}}
\begin{document}

\title{Morita equivalence classes of $2$-blocks of defect three}

\author{Charles W. Eaton}

\date{22nd September 2014}
\maketitle


\begin{abstract}
We give a complete description of the Morita equivalence classes of blocks with elementary abelian defect groups of order $8$ and of the derived equivalences between them. A consequence is the verification of Brou\'e's abelian defect group conjecture for these blocks. It also completes the classification of Morita and derived equivalence classes of $2$-blocks of defect at most three defined over a suitable field.
\end{abstract}


\section{Introduction}

Throughout let $k$ be an algebraically closed field of prime characteristic $\ell$ and let $\mathcal{O}$ be a discrete valuation ring with residue field $k$ and field of fractions $K$ of characteristic zero. We assume that $K$ is large enough for the groups under consideration. We consider blocks $B$ of $\mathcal{O}G$ with defect group $D$.

We are concerned with the description of the Morita and derived equivalence classes of (module categories for) blocks of finite groups with a given defect group $D$. We briefly review progress on this problem to date. If $D$ is an abelian $p$-group whose automorphism group is a $p$-group, then any block with defect group $D$ must be nilpotent and so Morita equivalent to $\mathcal{O} D$ (see~\cite{ku80} and~\cite{puig88}). There are many other examples of $p$-groups for which it has been proved that every fusion system is nilpotent, but we do not list these here. If $D$ is cyclic, then the Morita equivalence classes can be characterised in terms of Brauer trees, in work going back to Brauer and Dade (see~\cite{li96}). In a series of papers Erdmann characterises the Morita equivalence classes of tame blocks defined over $k$ except when $D$ is generalised quaternion and $B$ has two simple modules (see~\cite{er90}), although the problem remains open for blocks defined over $\mathcal{O}$. The (three) Morita equivalence classes of blocks defined over $\mathcal{O}$ with defect group $C_2 \times C_2$ are determined in~\cite{li94}. When $D=\langle x,y: x^{2^r}=y^{2^s}=[x,y]^2=[x,[x,y]]=[y,[x,y]]=1\rangle$, where $r \geq s \geq 1$ (nonmetacylic minimal nonabelian $2$-group), the Morita equivalence classes of blocks are determined in~\cite{sa11} and~\cite{eks12}. When $D$ is a homocyclic $2$-group, the Morita equivalence classes of blocks are determined in~\cite{ekks14}.

In this paper we use the classification given in~\cite{ekks14} to completely determine the Morita and derived equivalence classes of blocks defined over $\mathcal{O}$ with defect group $D \cong C_2 \times C_2 \times C_2$. As a consequence it follows that Brou\'e's abelian defect group conjecture holds for blocks of defect three. We also note that this completes the classification of Morita equivalence classes of $2$-blocks of defect at most three, for blocks defined over $k$. Blocks with elementary abelian defect groups of order $8$ have already been studied in~\cite{kkl12}, where it is shown that Alperin's weight conjecture and the isotypy version of Brou\'e's abelian defect group conjecture hold for these blocks. The results of~\cite{kkl12} are needed here, in particular to achieve Morita equivalences over $\mathcal{O}$ rather than $k$.

Before stating the main theorem, we recall the definition of the inertial quotient of $B$. Let $b_D$ be a block of $\mathcal{O}DC_G(D)$ with Brauer correspondent $B$, and write $N_G(D,b_D)$ for the stabilizer in $N_G(D)$ of $b_D$ under conjugation. Then the \emph{inertial quotient} of $B$ is $E=N_G(D,b_D)/DC_G(D)$, an $\ell'$-group unique up to isomorphism.

\begin{theorem}
\label{maintheorem}
Let $B$ be a block of $\mathcal{O}G$, where $G$ is a finite group. If $B$ has defect group $D$ isomorphic to $C_2 \times C_2 \times C_2$, then $B$ is Morita equivalent to the principal block of precisely one of the following:

(i) $D$;

(ii) $D \rtimes C_3$;

(iii) $C_2 \times A_5$, and the inertial quotient is $C_3$.

(iv) $D \rtimes C_7$;

(v) $SL_2(8)$, and the inertial quotient is $C_7$;

(vi) $D \rtimes (C_7 \rtimes C_3)$;

(vii) $J_1$, and the inertial quotient is $C_7 \rtimes C_3$;

(viii) $^2G_2(3) \cong \Aut(SL_2(8))$, and the inertial quotient is $C_7 \rtimes C_3$;

\medskip

Blocks are derived equivalent if and only if they have the same inertial quotient.
\end{theorem}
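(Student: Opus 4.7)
My plan is to first reduce to a finite list of group-theoretic cases, then identify each Morita class, and finally construct the derived equivalences. The possible inertial quotients $E$ are the odd-order subgroups of $\Aut(D) = \GL_3(2) \cong \PSL_2(7)$ up to conjugacy, namely $1$, $C_3$, $C_7$, and $C_7 \rtimes C_3$; these match the four groupings (i), (ii)--(iii), (iv)--(v), (vi)--(viii) in the statement. If $E=1$ then $B$ is nilpotent, so Puig's theorem immediately gives Morita equivalence with $\mathcal{O}D$, settling case (i).

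For the nontrivial inertial quotients, I would reduce a putative minimal counterexample via standard Clifford-theoretic machinery (Fong--Reynolds, the K\"ulshammer--Puig extension theory, factoring out $O_{\ell'}(Z(G))$, and the reduction techniques used in~\cite{ekks14} for homocyclic defect groups) to the situation of a block of a quasi-simple group or a closely related central extension. Invoking the classification of finite simple groups together with the enumeration in~\cite{kkl12} of quasi-simple groups admitting a $2$-block of defect $C_2^3$, I obtain a concrete finite list of remaining cases involving small alternating groups, small groups of Lie type such as $\SL_2(8)$ and ${}^2G_2(3)$, and certain sporadic groups such as $J_1$.

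For each remaining case I would determine the Morita class by combining the isotypy and decomposition-matrix information of~\cite{kkl12} with an explicit comparison of source algebras; each such block turns out to match exactly one of the eight listed representatives. The most delicate technical step is upgrading from Morita equivalence over $k$ to Morita equivalence over $\mathcal{O}$: I would rely on the abelian defect together with the isotypies already supplied by~\cite{kkl12} to control the cohomological obstructions to lifting, following Puig's general strategy.

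For the derived equivalence claim, one direction is numerical: the numbers $|\Irr(B)|$ and $|\IBr(B)|$ depend only on $|E|$ and distinguish the four inertial quotients, so derived equivalence forces a common inertial quotient. Conversely, within each inertial quotient I would construct explicit splendid Rickard tilting complexes between the listed representatives, in the spirit of Brou\'e's abelian defect conjecture (for $E=C_3$ between $\mathcal{O}(D \rtimes C_3)$ and the principal block of $C_2 \times A_5$; for $E=C_7$ between $\mathcal{O}(D \rtimes C_7)$ and $\SL_2(8)$; and three mutual derived equivalences for $E = C_7 \rtimes C_3$). The main obstacle will be the $E = C_7 \rtimes C_3$ case: one must rigorously separate the three Morita classes (using invariants beyond character numerics, such as K\"ulshammer ideals or the source-algebra structure) and simultaneously construct explicit derived equivalences over $\mathcal{O}$ relating the local block to the principal blocks of both $J_1$ and ${}^2G_2(3)$, which requires genuine new input beyond what is already supplied by~\cite{kkl12}.
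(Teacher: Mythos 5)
Your outline follows the same broad strategy as the paper (Fong--Reynolds and K\"ulshammer--Puig reductions to a quasisimple layer, then the classification of quasisimple groups with such blocks --- which, note, is Proposition \ref{classification} from \cite{ekks14}, not \cite{kkl12} --- followed by a case-by-case identification), but it skips precisely the two points where the paper needs genuine new arguments. First, after the reduction you cannot assume the defect group lies in a quasisimple group or central extension thereof: the critical configuration is $D \not\leq F^*(G)$, where $D=(D\cap F^*(G))\times Q$ with $|Q|=2$ and the layer carries a Klein-four block. Splitting off the factor $Q$ is known over $k$ by \cite{kk96}, but the whole point of the paper is to work over $\mathcal{O}$, and your plan to ``control the cohomological obstructions to lifting, following Puig's general strategy'' does not supply this; the paper instead proves a new $\mathcal{O}$-version of the splitting theorem (Proposition \ref{kkext}), obtained by showing that the perfect isometry of \cite[5.1]{kkl12} can be chosen compatibly with restriction to $N$ and then using Brou\'e's induced isomorphism of centers to produce a graded unit of degree $x$. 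Without an argument of this kind your reduction stalls over $\mathcal{O}$. Second, the case $|O_2(G)|=2$ with $D/O_2(G)$ Klein four is not covered by your sketch: there the paper passes to $G/O_2(G)$, applies the source-algebra classification of Klein-four blocks \cite{CEKL}, and lifts through the central extension using \cite[Corollary 1.14]{pu01} to land on $C_2\times A_4$ or $C_2\times A_5$; a mere Morita or decomposition-matrix comparison does not achieve this.

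On the final claims you also diverge from what is actually needed. Separating the three classes with inertial quotient $C_7\rtimes C_3$ does not require K\"ulshammer ideals or source-algebra invariants: the decomposition matrices (a Morita invariant) of $\mathcal{O}(D\rtimes(C_7\rtimes C_3))$, $J_1$ and $^2G_2(3)$ already differ, which is all the paper uses; on the other hand, identifying the $^2G_2(q)$ and $Co_3$ blocks with the $^2G_2(3)$ class rests on quoted Morita equivalences (\cite{ok97}, \cite[1.5]{kmn11}) plus the covering analysis of Proposition \ref{ree}, none of which appears in your plan. For the derived-equivalence statement, your proposal to construct new splendid Rickard complexes over $\mathcal{O}$ is both more than is needed and more than the paper delivers --- the paper explicitly notes it does not prove splendidness, and the derived equivalences within each inertial quotient come from existing results (e.g.\ Okuyama's equivalences and \cite{kmn11}) combined with the Morita classification; your ``numerical'' direction is fine, since $l(B)$ distinguishes the four inertial quotients and is a derived invariant. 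So while the skeleton is right, the proposal as written has real gaps at the $\mathcal{O}$-splitting step and the central $O_2$ case, and proposes to re-prove by hand equivalences that should instead be imported.
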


 A block with defect group $C_2 \times C_2 \times C_2$ cannot be Morita equivalent to a block with non-isomorphic defect group. This is since Morita equivalence preserves defect and (i) $2$-blocks of defect three with abelian defect groups other than $C_2 \times C_2 \times C_2$ must be nilpotent (and so Morita equivalent to the group algebra of a defect group), (ii) $2$-blocks of defect three with nonabelian defect groups have five irreducible characters (whilst the number is eight for blocks with defect group $C_2 \times C_2 \times C_2$).

\begin{corollary}
Brou\'e's abelian defect group conjecture holds for all $2$-blocks with defect at most three. That is, let $B$ be a block of $\mathcal{O}G$ for a finite group $G$ with defect group $D$ of order dividing $8$, and let $b$ be the unique block of $\mathcal{O}N_G(D)$ with Brauer correspondent $B$. Then $B$ and $b$ have derived equivalent module categories.
\end{corollary}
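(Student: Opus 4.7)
The plan is to split by the order of $D$, dispatching $|D|\leq 4$ and the non-elementary-abelian possibilities for $|D|=8$ via earlier results on nilpotent blocks and Klein four blocks, and then reducing the case $D\cong C_2\times C_2\times C_2$ directly to Theorem~\ref{maintheorem}.

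Suppose first that $|D|\leq 2$, or that $D$ is one of $C_4$, $C_8$, $C_4\times C_2$. In each of these cases $\Aut(D)$ is a $2$-group, so the inertial quotient $E$, being an $\ell'$-subgroup of $\Aut(D)$ with $\ell=2$, is trivial. Thus the fusion system of $B$ on $D$ is nilpotent and $B$ is a nilpotent block, which by Puig's theorem~\cite{puig88} is Morita equivalent to $\mathcal{O}D$. The same reasoning applies to the Brauer correspondent $b$ in $\mathcal{O}N_G(D)$: it too has defect group $D$ and is a nilpotent block, hence Morita equivalent to $\mathcal{O}D$. Consequently $B$ and $b$ are Morita equivalent, and in particular derived equivalent. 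When $D\cong C_2\times C_2$, the derived equivalence between $B$ and $b$ is part of Linckelmann's classification~\cite{li94}, where Brou\'e's conjecture is verified for Klein four defect groups.

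This reduces everything to the case $D\cong C_2\times C_2\times C_2$. Here $b$ is a block of $\mathcal{O}N_G(D)$ with defect group $D$, and since $C_G(D)\leq N_G(D)$ the block $b_D$ of $\mathcal{O}DC_G(D)$ is simultaneously a Brauer correspondent of both $B$ and $b$. Because $D \trianglelefteq N_G(D)$, the inertial quotient of $b$ computed inside $N_G(D)$ equals $N_{N_G(D)}(D,b_D)/DC_G(D) = N_G(D,b_D)/DC_G(D) = E$, matching the inertial quotient of $B$. Theorem~\ref{maintheorem} then asserts that blocks with defect group $D$ having the same inertial quotient are derived equivalent, giving the derived equivalence between $B$ and $b$.

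The only ingredient beyond Theorem~\ref{maintheorem} and the cited results on nilpotent and Klein four blocks is the elementary fact that Brauer correspondence preserves the inertial quotient, so no substantive obstacle arises; the heavy lifting is entirely contained in the main theorem.
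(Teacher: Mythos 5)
Your argument is correct, and its overall skeleton (nilpotent cases for $C_2$, $C_4$, $C_8$, $C_4\times C_2$; the Klein four case via~\cite{li94}; the case $D\cong C_2\times C_2\times C_2$ via the final clause of Theorem~\ref{maintheorem}) matches the paper's. The one place where you genuinely diverge is the crucial elementary abelian case of order $8$: the paper identifies the derived equivalence class of $B$ with that of $b$ by matching the number of irreducible Brauer characters, arguing that $k(B)=k(b)=8$ (from~\cite{kkl12}/\cite{la81}) together with Brauer's second main theorem forces $l(B)=l(b)$, and then noting that $l$ determines the class. You instead match the inertial quotients directly, using that a root $b_D$ of $B$ in $\mathcal{O}DC_G(D)$ is simultaneously a root of the Brauer correspondent $b$ of $\mathcal{O}N_G(D)$, so that $N_{N_G(D)}(D,b_D)/DC_G(D)=N_G(D,b_D)/DC_G(D)$; this is a standard consequence of the (extended) first main theorem and is airtight, given that $b$ has defect group $D$ so Theorem~\ref{maintheorem} applies to it. Your route is arguably the more direct one, since the inertial quotient is exactly the invariant named in the theorem and is preserved under Brauer correspondence essentially by definition, avoiding any character counting; the paper's route through $l(B)=l(b)$ buys the same conclusion from block-theoretic numerical invariants without appealing to the identification of roots, and records along the way the fact (used elsewhere in the paper) that $l$ alone pins down the derived equivalence class. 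Both proofs carry the same logical weight, with the substantive content in each case residing entirely in Theorem~\ref{maintheorem}.
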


\begin{proof}
If a defect group $D$ are isomorphic to $C_2$, $C_4$, $C_4 \times C_2$ or $C_8$, then the block is nilpotent, in which case the conjecture holds automatically since $\Aut(D)$ is a $2$-group. If $D \cong C_2 \times C_2$, then the result follows from~\cite{li94}. Suppose that $D \cong C_2 \times C_2 \times C_2$. By Theorem \ref{maintheorem} the derived equivalence class of $B$ is uniquely determined by the number $l(B)$ of irreducible Brauer characters. Since every block with defect group $D$ has eight irreducible characters, it is a consequence of Brauer's second main theorem that $l(B)=l(b)$ and the result follows.
\end{proof}

Note that we do not prove that there are splendid derived equivalences of blocks.

\begin{corollary}
Let $B$ be a block with defect group $D \cong C_2 \times C_2 \times C_2$. Then $B$ has Loewy length $LL(B)$ equal to $4$, $6$ or $7$.
\end{corollary}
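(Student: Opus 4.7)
The plan is to exploit Morita invariance of Loewy length together with Theorem~\ref{maintheorem}. Since $LL$ depends only on the Morita equivalence class of the basic algebra, and a Morita equivalence over $\mathcal{O}$ descends by reduction mod $\pi$ to a Morita equivalence of the corresponding $k$-algebras, it suffices to compute $LL(B_0(kH))$ for each of the eight representative groups $H$ listed in (i)--(viii) of the main theorem, and then to read off the set of values that actually occur.

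I would partition the eight cases by difficulty. Case (i) is the nilpotent case: $B$ is Morita equivalent to $\mathcal{O}D$, and the elementary computation $J(kD)^3\ne 0 = J(kD)^4$ for $D\cong C_2\times C_2\times C_2$ gives $LL=4$. In cases (ii), (iv), (vi) the block is the principal block of a semidirect product $D\rtimes E$ with $\ell\nmid|E|$; viewing $k(D\rtimes E)$ as a skew group algebra over $kD$ one has $J(k(D\rtimes E))=J(kD)\cdot kE$, so again $LL = LL(kD) = 4$. The remaining cases (iii), (v), (vii), (viii) involve the principal blocks of $C_2\times A_5$, $SL_2(8)$, $J_1$ and ${}^2G_2(3)$, and here I would compute $LL$ directly from the known structure of these blocks, using the decomposition and Cartan data in the modular atlas, combined with the explicit descriptions of the basic algebras that arise in the course of proving Theorem~\ref{maintheorem} and in the work of Kessar--Koshitani--Linckelmann cited above.

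The main obstacle lies in the simple-group cases, particularly $J_1$ and ${}^2G_2(3)$, where the basic algebras are higher dimensional and no Brauer-tree shortcut applies. For these I would either read the radical series off an explicit quiver-and-relations presentation of the basic algebra, or carry out a direct computation using GAP on the principal block of the relevant group. Once all eight Loewy lengths are tabulated, one checks that the only values appearing are $4$, $6$ and $7$, which completes the proof.
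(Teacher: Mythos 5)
Your overall strategy is the same as the paper's: reduce via Theorem \ref{maintheorem} (using Morita invariance of Loewy length and reduction from $\mathcal{O}$ to $k$) to the eight representative principal blocks and tabulate their Loewy lengths. Your treatment of case (i) and of the normal-defect cases (ii), (iv), (vi) via the skew group algebra identity $J(k(D\rtimes E))=J(kD)\cdot k(D\rtimes E)$ is correct, and is essentially the content the paper imports from \cite[4.1]{km01}.

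The gap is in cases (iii), (v), (vii), (viii), which is exactly where the substance of the corollary lies: you never actually establish that the Loewy lengths there are $6$ and $7$, but only propose to ``read them off'' a quiver presentation or a GAP computation to be carried out later. As it stands the final sentence (``one checks that the only values appearing are $4$, $6$ and $7$'') asserts the conclusion rather than proving it. Note also that decomposition and Cartan data alone do not determine Loewy length, so the reference to the modular atlas cannot by itself close these cases (you implicitly concede this, but then leave the needed finer information unspecified). The paper closes them by citation: Alperin's determination of the projective modules of $SL_2(2^n)$ in \cite{al79} gives $LL=7$ for the principal block of $SL_2(8)$, Landrock--Michler \cite{lm78} gives $LL=7$ for $J_1$, and \cite[4.1]{km01} transfers the value $7$ from $SL_2(8)$ to its odd-index extension $^2G_2(3)\cong\Aut(SL_2(8))$; for case (iii) one uses $LL(kC_2\otimes_k B_0(kA_5))=1+LL(B_0(kA_5))=1+5=6$, with the Loewy length $5$ of the principal block of $kA_5$ being classical. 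If you replace your ``compute later'' step either by these references or by an actually executed computation of the four basic algebras, your argument becomes a complete proof along the same lines as the paper's.
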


\begin{proof}
By Theorem \ref{maintheorem} it suffices to consider cases (i)-(viii) in the notation of that theorem. In cases (i), (ii), (iv) and (vi), where $D \lhd G$ and $[G:D]$ is odd, we have that $LL(B)=LL(kD)=4$, by~\cite[4.1]{km01}. In case (iii) $LL(B)=6$. In the remaining cases $LL(B)=7$ by~\cite{al79} and~\cite{lm78}, again using~\cite[4.1]{km01}.
\end{proof}

\begin{corollary}
Let $B$ be a $2$-block of defect at most $3$, then the Cartan invariants of $B$ are at most the order of a defect group.
\end{corollary}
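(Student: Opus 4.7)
The plan is to reduce, via Morita invariance of the Cartan matrix, to checking a finite explicit list of principal blocks. First I would handle defect at most two: defect zero is trivial, defect one gives a single-edge Brauer tree with Cartan entries at most $2$, and defect two splits into the cyclic case $D\cong C_4$ (handled again by Brauer trees) and $D\cong C_2\times C_2$, for which~\cite{li94} yields only the three representatives $kD$, $kA_4$, and the principal block of $kA_5$, each with Cartan entries at most $4$.

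For defect three I would invoke Theorem~\ref{maintheorem} and treat its eight cases. In cases (i), (ii), (iv), (vi) the defect group is normal in $G$ with odd complement $E$, so the block is $\mathcal{O}[D\rtimes E]$. Since every projective $kG$-module is then free over $kD$, the projective cover $P_j$ of each simple $V_j$ has dimension $|D|\dim V_j$, and hence its composition factors satisfy $\sum_i c_{ij}\dim V_i = |D|\dim V_j$. Combined with the symmetry $c_{ij}=c_{ji}$ of the Cartan matrix this yields both $c_{ij}\dim V_j \leq |D|\dim V_i$ and $c_{ij}\dim V_i \leq |D|\dim V_j$, whence $c_{ij}^2\leq |D|^2$ and so $c_{ij}\leq |D|=8$.

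The remaining cases (iii), (v), (vii), (viii) are non-solvable. For each I would read off the decomposition matrix of the principal $2$-block of the listed group ($C_2\times A_5$, $\SL_2(8)$, $J_1$, ${}^2G_2(3)\cong\Aut(\SL_2(8))$) from the modular ATLAS or from the computations in~\cite{kkl12}, form the Cartan matrix as the product of the decomposition matrix with its transpose, and verify that no entry exceeds $8$. This is the only step requiring external input and is the main (but purely computational) obstacle; once the decomposition matrices are in hand, the verification is a routine matrix calculation.
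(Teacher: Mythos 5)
Your overall strategy -- Cartan matrices are Morita invariants, so check a complete list of representatives -- is exactly the argument the paper intends (the corollary is stated without a formal proof; the justification is the remark that the classification now gives a complete list of Cartan matrices for $2$-blocks of defect at most $3$). Your treatment of defect at most two is fine, and your dimension-count bound $c_{ij}\leq |D|$ in the normal-defect-group cases (i), (ii), (iv), (vi) is correct and pleasantly conceptual, though in those cases one could equally just write down the Cartan matrices of $k[D\rtimes E]$.

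The genuine gap is in your case division at defect exactly three: ``defect three'' means $|D|=8$, and the possible defect groups are $C_8$, $C_4\times C_2$, $D_8$ (dihedral), $Q_8$ (quaternion) as well as $C_2\times C_2\times C_2$; Theorem~\ref{maintheorem} covers only the elementary abelian case, so invoking it alone does not exhaust defect three. The cases $D\cong C_8$ and $C_4\times C_2$ cannot be waved away: such blocks are nilpotent, hence Morita equivalent to $\mathcal{O}D$, and their unique Cartan invariant is exactly $8$ -- the bound is attained here, so these cases must appear in the check (the same is true of case (i) of the theorem). The nonabelian cases $D_8$ and $Q_8$ are the tame blocks; their Morita classes over $k$ (Erdmann~\cite{er90}), or more simply their known decomposition/Cartan matrices going back to Brauer and Olsson, give entries well below $8$. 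Note that working over $k$ suffices because Cartan invariants are computed over $k$, which also sidesteps the fact that the quaternion case with two simple modules has no Morita classification over $\mathcal{O}$. With these additional cases inserted, your argument is complete and matches the paper's intended proof.
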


Of course the above does not hold in generality.

Since we now have a complete list of Cartan matrices (up to ordering of the simple modules), and indeed the decomposition matrices, for $2$-blocks of defect at most $3$, it would be interesting to look for possible concrete restrictions on Cartan matrices.

\section{Quoted results}

The following proposition will be used when considering automorphism groups of simple groups. It gathers together two propositions from~\cite{kkl12}, which in turn gathers results from~\cite{da73} and~\cite{ku90}.

\begin{proposition}
\label{innerautos}
Let $\ell$ be any prime and let $G$ be a finite group and $N \lhd G$ with $[G:N]=w$ a prime not equal to $\ell$. Let $b$ be a $G$-stable $\ell$-block of $\mathcal{O} N$. Then either each block of $\mathcal{O}G$ covering $b$ is Morita equivalent to $b$, or there is a unique block of $\mathcal{O}G$ covering $b$. In the former case, $B$ and $b$ have isomorphic inertial quotient.

\end{proposition}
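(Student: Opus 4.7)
The plan is to analyse the algebra $A := e\mathcal{O}G$, where $e$ is the block idempotent of $b$, as a crossed product of $b$ by the cyclic quotient $G/N$ of prime order $w$. Because $b$ is $G$-stable, the idempotent $e \in Z(\mathcal{O}N)$ is $G$-invariant under conjugation and hence already lies in $Z(\mathcal{O}G)$, so $A$ is an algebra summand of $\mathcal{O}G$ whose primitive central idempotents are precisely the block idempotents of $\mathcal{O}G$ covering $b$. Picking $g \in G$ with $G = N\langle g\rangle$, the decomposition $A = b \oplus bg \oplus \cdots \oplus bg^{w-1}$ realises $A$ as a $(G/N)$-graded crossed product with identity component $b$, and conjugation by $g$ induces an $\mathcal{O}$-algebra automorphism $\sigma$ of $b$ whose $w$-th power is inner (since $g^w \in N$). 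Hence the class of $\sigma$ in $\Out(b)$ has order either $1$ or $w$, producing a natural dichotomy.

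I would handle the two cases separately. In the inner case, where $\sigma(x) = u x u^{-1}$ for some $u \in b^{\times}$, the element $v := u^{-1}g \in A$ commutes with $b$; using $gx = \sigma(x)g$ iteratively gives $v^w = \bigl[u^{-1}\sigma(u^{-1})\cdots\sigma^{w-1}(u^{-1})\bigr]\cdot g^w$, which lies in $b$, commutes with $b$, and (being a product of units of $b$) is a unit of $Z(b)$. Since $w$ is coprime to $\ell$, Hensel's lemma supplies a $w$-th root $z_0 \in Z(b)^{\times}$ of $v^w$; after replacing $v$ by $vz_0^{-1}$ we may assume $v^w = e$. Then $A = b \cdot \mathcal{O}\langle v\rangle \cong b \otimes_{\mathcal{O}} \mathcal{O}C_w$, and since $\mathcal{O}$ contains the primitive $w$-th roots of unity this is isomorphic to $b^w$. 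Thus there are exactly $w$ blocks of $\mathcal{O}G$ covering $b$, each isomorphic to $b$ as an $\mathcal{O}$-algebra and hence Morita equivalent to $b$. In the outer case, where $\sigma$ has order $w$ in $\Out(b)$, I would appeal to the cocycle description of $Z(A)$ from Dade's paper on block extensions: the outerness of $\sigma^i$ for $1 \le i \le w-1$ forces $Z(A)$ to be local, so $e$ remains primitive in $Z(\mathcal{O}G)$ and exactly one block of $\mathcal{O}G$ covers $b$.

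The inertial-quotient assertion in the Morita-equivalent case is then essentially automatic: the isomorphisms $B \cong b$ are implemented by multiplication by central idempotents of $A$, which commute with any defect group $D \le N$ of $b$ (and $D$ is also a defect group of $B$ because $[G:N]$ is coprime to $\ell$), so these isomorphisms are $D$-equivariant. Equivariance gives an isomorphism of source algebras and hence of the associated fusion systems on $D$, so the inertial quotients of $B$ and $b$ coincide. The step I expect to be the main obstacle is the outer case: although the claim that $Z(A)$ is local is intuitively clear, making it precise without circularity requires the careful crossed-product analysis of Dade and its reformulation by K\"ulshammer, which I would invoke rather than reprove.
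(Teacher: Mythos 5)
Your proof is correct, and your inner/outer dichotomy is exactly the paper's dichotomy: $g$ acts as an inner automorphism of $b$ precisely when Dade's subgroup $G[b]$ equals $G$, and since $G[b]$ is normal and contains $N$, primality of $w$ leaves only $G[b]=G$ or $G[b]=N$, which are your two cases. The difference is in execution. The paper's proof is purely citation-based: it quotes~\cite[2.2]{kkl12} to get that each covering block is source algebra equivalent to $b$ when $G[b]=G$, \cite[3.4]{kkl12} for the inertial quotient, and~\cite[2.3]{kkl12} for uniqueness when $G[b]=N$. You instead reprove the first case directly by trivialising the crossed product $e\mathcal{O}G=\bigoplus_i bg^i$: the Hensel adjustment of $v=u^{-1}g$ is legitimate because $w\neq\ell$ and $Z(b)$ is a complete local ring with residue field $k$ (this does use completeness of $\mathcal{O}$ and the ``large enough'' hypothesis to split $\mathcal{O}C_w$, both standing assumptions), giving $e\mathcal{O}G\cong b\otimes_\mathcal{O}\mathcal{O}C_w\cong b^{\,w}$ and hence algebra isomorphisms $b\cong B$, $x\mapsto xf$, for the covering blocks $B$; your further observation that these maps intertwine the structural maps of a common defect group $D\leq N$ (using that defect groups of $B$ lie in $N$ and meet $N$ in a defect group of $b$) makes them isomorphisms of interior $D$-algebras, hence of source algebras and fusion systems, which recovers the inertial quotient claim without quoting~\cite[3.4]{kkl12} -- and in fact recovers the source algebra equivalence that the paper's version of Proposition~\ref{innerautos} silently delivers and later uses in the proof of Proposition~\ref{ree}. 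For the outer case you defer, as you acknowledge, to the Dade--K\"ulshammer analysis of $Z(e\mathcal{O}G)$; that is the same ultimate source (\cite{da73}, \cite{ku90}) on which the paper's citation of~\cite[2.3]{kkl12} rests, so this is not a gap relative to the paper's own standard. In short, your route buys a more self-contained, explicit construction of the equivalences (indeed algebra isomorphisms and an exact count of $w$ covering blocks), while the paper's route buys brevity by outsourcing both cases to~\cite{kkl12}.
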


\begin{proof}
Note that the group $G[b]$ of elements of $G$ acting as inner automorphisms on $b$ is a normal subgroup of $G$ containing $N$. If $G[b]=G$, then each block of $G$ covering $b$ is source algebra equivalent to $b$ by~\cite[2.2]{kkl12}, and has inertial quotient isomorphic to that of $b$ by~\cite[3.4]{kkl12}. If $G[b]=N$, then there is a unique block of $G$ covering $b$ by~\cite[2.3]{kkl12}.
\end{proof}

The following is a distillation of those results in~\cite{kp90} which are relevant here.

\begin{proposition}[\cite{kp90}]
\label{kp}
Let $G$ be a finite group and $N \lhd G$. Let $B$ be a block of $\mathcal{O} G$ with defect group $D$ covering a $G$-stable nilpotent block $b$ of $\mathcal{O} N$ with defect group $D \cap N$. Then there is a finite group $L$ and $M \lhd L$ such that (i) $M \cong D \cap N$, (ii) $L/M \cong G/N$, (iii) there is a subgroup $D_L$ of $L$ with $D_L \cong D$ and $D_L \cap M \cong D \cap N$, and (iv) there is a
a central extension $\tilde{L}$ of $L$ by an $\ell'$-group, and  a
 block $\tilde{B}$ of  $\mathcal{O} \tilde L$   which is Morita equivalent to $B$ and has defect group $\tilde{D} \cong D$.
\end{proposition}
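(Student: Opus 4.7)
The plan is to deduce this as a direct consequence of the main theorem of K\"ulshammer--Puig~\cite{kp90}. The first step is to apply Puig's theorem on nilpotent blocks to the $G$-stable nilpotent block $b$: a source algebra of $b$ is isomorphic, as an interior $(D \cap N)$-algebra, to a matrix algebra over $\mathcal{O}(D \cap N)$. This packages $b$ together with its fusion data into the group $D \cap N$ plus a class in $H^2$ controlling the twisting.

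Next I would feed this input into the K\"ulshammer--Puig extension theorem. Since $B$ covers $b$ and $b$ is $G$-stable, this result describes $B$ up to source algebra equivalence in terms of an extension of $G/N$ by a group isomorphic to $D \cap N$, twisted by a 2-cocycle with values in $\mathcal{O}^{\times}$. Taking $L$ to be this extension and $M$ the image of $D \cap N$ gives (i) and (ii) immediately, and a defect group $D_L$ of the corresponding twisted block satisfies $D_L \cap M \cong D \cap N$ and $D_L \cong D$, giving (iii).

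To realize the twisted group algebra as an honest block, I would use the standard trick of replacing $L$ by a central $\ell'$-extension $\tilde L$ chosen so that the inflation of the relevant cocycle to $\tilde L$ becomes trivial. Then the block $\tilde B$ of $\mathcal{O} \tilde L$ determined by the corresponding primitive idempotent is source algebra equivalent (and in particular Morita equivalent) to $B$; lifting $D_L$ through the central quotient yields a defect group $\tilde D \cong D$ of $\tilde B$, which gives (iv).

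The main obstacle is not conceptual but bookkeeping: one has to translate the interior $G$-algebra formulation of~\cite{kp90} into the explicit group-theoretic data (i)--(iv), keeping careful track of defect groups through Puig's isomorphism, the source algebra equivalence between $B$ and $\tilde B$, and the central $\ell'$-extension, to verify in particular that one can arrange $\tilde D \cong D$ rather than a nontrivial central extension of $D$.
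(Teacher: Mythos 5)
Your outline is essentially the same route the paper takes: Proposition \ref{kp} is stated there as a quoted ``distillation'' of K\"ulshammer--Puig \cite{kp90} with no independent proof, and your sketch (Puig's nilpotent block theorem for $b$, the \cite{kp90} extension theorem giving a twisted group algebra of an extension $L$ of $G/N$ by $D\cap N$, then killing the cocycle by a central $\ell'$-extension $\tilde L$ while tracking defect groups) is exactly how that result is extracted from \cite{kp90}. So the proposal is correct and matches the paper's (implicit) argument.
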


\begin{proposition}[\cite{wa94}]
\label{watanabe}
Let $B$ be an $\ell$-block of $\mathcal{O} G$ for a finite group $G$ and let $Z \leq O_\ell(Z(G))$. Let $\bar B$ be the unique block of $\mathcal{O}(G/Z)$ corresponding to $B$. Then $B$ is nilpotent if and only if $\bar B$ is nilpotent.
\end{proposition}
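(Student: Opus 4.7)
The plan is to apply the Broué--Puig local characterization of nilpotency in terms of Brauer pairs, combined with a natural correspondence between $B$-Brauer pairs and $\bar B$-Brauer pairs induced by $\pi : \mathcal{O}G \to \mathcal{O}(G/Z)$. Since $Z \leq O_\ell(Z(G))$, the ideal of $\mathcal{O}G$ generated by $\{z-1 : z \in Z\}$ lies in the Jacobson radical, so $\pi$ lifts primitive central idempotents; this produces the bijection $B \leftrightarrow \bar B$, and defect groups correspond under $D \mapsto D/Z$ (noting that $Z \leq D$ automatically, since $Z$ is a central $\ell$-subgroup).

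Next I would establish a $G$-equivariant, inclusion-preserving bijection between $B$-Brauer pairs $(Q, e_Q)$ with $Z \leq Q$ and $\bar B$-Brauer pairs $(\bar Q, \bar e_{\bar Q})$. The key auxiliary subgroup is $\hat C(Q) := \{g \in G : [g, Q] \leq Z\}$: its image under $\pi$ is precisely $C_{G/Z}(Q/Z)$, while the map $g \mapsto (q \mapsto [g,q])$ embeds $\hat C(Q)/C_G(Q)$ into $\Hom(Q, Z)$, showing that $\hat C(Q)/C_G(Q)$ is an $\ell$-group. Consequently $e_Q$ is $\hat C(Q)$-stable and has a unique cover $\hat e_Q$ in $\mathcal{O}\hat C(Q)$; applying the first-paragraph correspondence to $\hat C(Q)$ and its central $\ell$-subgroup $Z$ then yields the desired block $\bar e_{\bar Q}$ of $\mathcal{O}C_{G/Z}(Q/Z)$.

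The comparison of inertial quotients is then essentially formal. One checks that $\hat C(Q) \leq N_G(Q, e_Q)$ (inner automorphisms fix blocks, so $\hat C(Q)$-stability of $e_Q$ suffices) and that $N_{G/Z}(\bar Q, \bar e_{\bar Q}) = \pi(N_G(Q, e_Q))$, which gives
\[
\frac{N_{G/Z}(\bar Q, \bar e_{\bar Q})}{C_{G/Z}(\bar Q)} \;\cong\; \frac{N_G(Q, e_Q)}{\hat C(Q)}.
\]
Since $\hat C(Q)/C_G(Q)$ is an $\ell$-group, the right-hand side is an $\ell$-group if and only if $N_G(Q, e_Q)/C_G(Q)$ is; applying the Broué--Puig criterion on each side yields the equivalence. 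The main obstacle is the technical bookkeeping for the Brauer-pair bijection, in particular verifying compatibility with the inclusion relation on Brauer pairs and with $G$-conjugation; once that is in place, the nilpotency comparison reduces to the single observation that the extra group $\hat C(Q)/C_G(Q) \hookrightarrow \Hom(Q, Z)$ is an $\ell$-group.
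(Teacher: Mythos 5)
First, note that the paper does not prove this statement at all: it is listed under ``Quoted results'' and cited to Watanabe \cite{wa94}, so there is no in-paper argument to compare yours with; I can only assess your proposal on its own merits. Your overall route (the Brou\'e--Puig criterion plus a comparison of Brauer pairs of $B$ and $\bar B$ through the preimage $\hat C(Q)$ of $C_{G/Z}(Q/Z)$, with the key observation that $\hat C(Q)/C_G(Q)$ embeds in $\Hom(Q,Z)$ and hence is an $\ell$-group) is a sensible one and is close to how such statements are proved. But there is a genuine gap: the assertion that $e_Q$ is $\hat C(Q)$-stable does not follow from $\hat C(Q)/C_G(Q)$ being an $\ell$-group (an $\ell$-group acting on blocks need not fix them), and it is in fact false in general. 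Take $\ell=2$, $G=D_{24}$ (dihedral of order $24$), $Z=Z(G)\cong C_2$, and let $B$ be the non-principal block lying over the two nontrivial characters of $O_{2'}(G)\cong C_3$; its defect group is $Q\cong C_4\geq Z$. Here $C_G(Q)\cong C_{12}$, while any reflection $h$ satisfies $[h,Q]=Q^2=Z$, so $\hat C(Q)=G$. The blocks $e$ of $C_G(Q)$ with $(Q,e)$ a $B$-Brauer pair are the two blocks indexed by the nontrivial characters of $C_3$, and $h$ swaps them. Consequently $\hat C(Q)\not\leq N_G(Q,e_Q)$, your claimed equality $N_{G/Z}(\bar Q,\bar e_{\bar Q})=\pi(N_G(Q,e_Q))$ fails (the left side is all of $G/Z$, the right side is $C_{12}/Z$), the displayed isomorphism is not even well defined, and the asserted bijection of Brauer pairs collapses (both pairs above would have the same image). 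This example is of course not a counterexample to the proposition --- both $B$ and $\bar B$ are nilpotent there --- but it kills the intermediate claims your argument rests on.

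The repair is to give up the bijection and stability and argue with covering and orbits instead. Since $\hat C(Q)/C_G(Q)$ is an $\ell$-group, $e_Q$ has a unique cover $\hat e_Q$ in $\mathcal{O}\hat C(Q)$ (no stability needed), and the blocks of $C_G(Q)$ lying under $\hat e_Q$ form a single $\hat C(Q)$-orbit; passing to $\hat C(Q)/Z$ gives $\bar e_{\bar Q}$. One then proves the weaker but correct identity $N_{G/Z}(\bar Q,\bar e_{\bar Q})=\pi\bigl(N_G(Q,e_Q)\bigr)\,C_{G/Z}(\bar Q)$: any $\bar g$ fixing $\bar e_{\bar Q}$ lifts to $g\in N_G(Q)$ fixing $\hat e_Q$, and ${}^g e_Q$ can be corrected by an element of $\hat C(Q)$. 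This yields $N_{G/Z}(\bar Q,\bar e_{\bar Q})/C_{G/Z}(\bar Q)\cong N_G(Q,e_Q)/\bigl(N_G(Q,e_Q)\cap\hat C(Q)\bigr)$, whose kernel of comparison with $N_G(Q,e_Q)/C_G(Q)$ embeds in $\Hom(Q,Z)$, so one side is an $\ell$-group if and only if the other is --- which is exactly the input the Brou\'e--Puig criterion needs. You still owe the bookkeeping you deferred, and it is not entirely cosmetic: you must check that $(\bar Q,\bar e_{\bar Q})$ really is a $\bar B$-Brauer pair (compatibility of $\mathrm{Br}_Q(1_B)$ with $\mathrm{Br}_{\bar Q}(1_{\bar B})$), that every $\bar B$-Brauer pair arises this way up to conjugacy, and that for the direction ``$\bar B$ nilpotent $\Rightarrow$ $B$ nilpotent'' it suffices to test $B$-Brauer pairs $(Q,e)$ with $Z\leq Q$ (e.g.\ by extending morphisms from $Q$ to $QZ$, using that $Z$ is central and fixed by all block fusion).
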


\begin{proposition}[\cite{ekks14}]
\label{classification}
Let $B$ be a block of $\mathcal{O}G$ for a quasisimple group $G$ with elementary abelian defect group $D$ of order $8$. Then one of the following occurs:

(i) $G \cong SL_2(8)$ and $B$ is the principal block;

(ii) $G \cong$ $^2G_2(q)$, where $q=3^{2m+1}$ for some $m \in \NN$, and $B$ is the principal block;

(iii) $G \cong J_1$ and $B$ is the principal block;

(iv) $G \cong Co_3$ and $B$ is the unique non-principal $2$-block of defect $3$;

(v) $G$ is of type $D_n(q)$ or $E_7(q)$ for some $q$ of odd prime power order, $O_2(G)=1$ and $B$ is Morita equivalent to the principal block of $C_2 \times A_5$ or of $C_2 \times A_4$.

(vi) $|O_2(G)|=2$ and $D/O_2(G)$ is a Klein four group;

(vii) $B$ is nilpotent.
\end{proposition}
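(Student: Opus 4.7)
The strategy is to apply the Classification of Finite Simple Groups (CFSG) to the central quotient $G/Z(G)$, after first controlling $O_2(G)$. Since $G$ is quasisimple, $O_2(G) \leq Z(G) \cap D$, so $|O_2(G)|$ divides $|D|=8$. If $|O_2(G)|=8$ then $D$ is central and $B$ is nilpotent (case~(vii)). If $|O_2(G)|=4$ then $D/O_2(G)$ is cyclic of order two and $O_2(G) \leq O_\ell(Z(G))$, so by Proposition~\ref{watanabe} the corresponding block $\bar B$ of $\mathcal{O}(G/O_2(G))$ has cyclic defect group of order~$2$, hence trivial inertial quotient, so is nilpotent; this forces $B$ itself into case~(vii). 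The case $|O_2(G)|=2$ is precisely case~(vi). Hence the substantive task is the situation $O_2(G)=1$, where $S := G/Z(G)$ is a non-abelian simple group admitting a $2$-block of defect three with elementary abelian defect group.

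I would then proceed family by family through the CFSG. For alternating groups one uses the Nakayama description of $2$-blocks of $S_n$, the corresponding description for $A_n$, and their spin covers: the defect groups are iterated wreath products of copies of $C_2$, which fail to be elementary abelian as soon as they exceed order~$2$, so no alternating group contributes. The $26$ sporadic simple groups and their covers are treated by direct inspection of the $2$-modular ATLAS, isolating the principal block of $J_1$ (case~(iii)) and the non-principal defect-$3$ block of $Co_3$ (case~(iv)). For groups of Lie type in defining characteristic~$2$, the unique $2$-block is the principal one with defect group equal to a Sylow $2$-subgroup; requiring $|G|_2 = 8$ with $\Syl_2(G) \cong C_2^3$ isolates $\SL_2(8)$ (case~(i)). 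The Ree groups $^2G_2(q)$ with $q = 3^{2m+1}$, $m\geq 1$, have Sylow $2$-subgroup elementary abelian of order~$8$, contributing case~(ii) through their principal blocks.

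The main obstacle is the remaining case of classical and exceptional groups of Lie type in odd defining characteristic. Here I would employ the Jordan decomposition of blocks together with the Bonnaf\'e--Rouquier Morita equivalence to reduce to quasi-isolated $2$-blocks in twisted Levi subgroups of the dual group, and then enumerate the quasi-isolated semisimple $2'$-elements $s$ for which a Sylow $2$-subgroup of $C_{G^*}(s)$ is elementary abelian of order~$8$. Combined with the Cabanes--Enguehard parametrization of $\ell$-blocks at the unipotent level, this analysis singles out types $D_n(q)$ and $E_7(q)$ for suitable $q$, with the resulting blocks being Morita equivalent to the principal blocks of $C_2 \times A_5$ or $C_2 \times A_4$ (case~(v)). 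A final check using the standard block correspondences for central extensions by $\ell'$-groups ensures that passing from the simple quotient back up to the quasisimple cover introduces no Morita class outside those enumerated. The bulk of the labour lies entirely in this odd-characteristic Lie-type analysis; the other families are comparatively transparent.
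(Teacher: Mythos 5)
This proposition is not proved in the paper at all: it is quoted verbatim from \cite{ekks14}, so there is no in-paper argument to compare with, and any ``proof'' here is really a summary of that (long) paper. Your opening reductions are correct and harmless: for quasisimple $G$ one has $O_2(G)\leq Z(G)\cap D$, the cases $|O_2(G)|=8$ and $|O_2(G)|=4$ give nilpotent blocks (the latter via Proposition \ref{watanabe}, since a defect group of order $2$ forces nilpotency), and $|O_2(G)|=2$ is exactly the catch-all (vi). The CFSG case division you then set up does match the broad strategy of \cite{ekks14}.

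However, as a proof the proposal has genuine gaps, and they are exactly where the content of the theorem lies. First, ``quasisimple'' is not disposed of by your final remark: when $Z(G)$ has nontrivial odd part, the faithful blocks of the covering group are \emph{not} blocks of the simple quotient, so no ``standard block correspondence for central extensions by $\ell'$-groups'' reduces them to the simple case; the exceptional covers (sporadic and Lie-type, $3.A_6$, $3.A_7$, etc.) have to be examined for faithful $2$-blocks with defect group $C_2\times C_2\times C_2$, and this is real work in \cite{ekks14}. Second, your alternating-group argument is too quick: defect groups of $2$-blocks of $A_n$ and of its double covers are index-at-most-two subgroups of Sylow $2$-subgroups of $S_{2w}$, not themselves iterated wreath products, so ruling out $C_2^3$ needs a separate (if short) argument (e.g.\ weight $3$ yields a dihedral group of order $8$); the double covers at least fall under (vi) since there $|O_2(G)|=2$. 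Third, and decisively, the odd-characteristic Lie-type analysis is only named, not carried out: one must classify the quasi-isolated $2$-blocks with abelian defect groups, treat the quasi-isolated semisimple elements for which Bonnaf\'e--Rouquier gives no proper Levi reduction (this is precisely where the order-two centres in types $D_n$ and $E_7$ enter), and then the Morita statement in (v) is not a consequence of enumerating centralizers: it requires identifying the covered Klein-four block structure and invoking the source-algebra classification of \cite{CEKL}. So what you have is a credible roadmap that coincides with the route actually taken in \cite{ekks14}, but every step that makes the theorem hard is left unexecuted.
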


\begin{lemma}
\label{normaldefect}
Let $B$ be a block of $\mathcal{O}G$ for a finite group $G$ with normal defect group $D \cong C_2 \times C_2 \times C_2$. Then $B$ is Morita equivalent to $\mathcal{O} (D \rtimes E)$, where $E$ has odd order and acts faithfully on $D$.
\end{lemma}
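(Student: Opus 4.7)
The plan is to combine a Fong--Reynolds reduction with Proposition \ref{kp}, then use a Schur multiplier computation to eliminate the resulting central $\ell'$-extension. Applying Fong--Reynolds at the stabiliser $N_G(D,b_D)$ preserves both the Morita equivalence class of $B$ and the inertial quotient $E$, so we may assume $b_D$ is $G$-stable (note $D \leq N_G(D,b_D)$ since $D$ acts trivially on $C_G(D)$). Since $D$ is abelian and $D \lhd G$, we have $DC_G(D) = C_G(D)$ and $D \leq Z(C_G(D))$, hence $b_D$ is a nilpotent block of $\mathcal{O}C_G(D)$ with defect group $D$. Proposition \ref{kp} applied with $N = C_G(D)$ then produces a finite group $L$ with normal subgroup $M \cong D$ and $L/M \cong E$, a subgroup $D_L \leq L$ with $D_L \cong D$ and $D_L \cap M \cong D$ (which forces $D_L = M$ by order considerations), a central $\ell'$-extension $\tilde L$ of $L$, and a block $\tilde B$ of $\mathcal{O}\tilde L$ with defect group $\cong D$ that is Morita equivalent to $B$.

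Next I would identify $L$ and $E$ explicitly. The inertial quotient $E$ embeds into $\Aut(D) \cong \GL_3(\mathbb{F}_2)$ as an $\ell'$-subgroup; since $|\GL_3(\mathbb{F}_2)| = 168 = 2^3 \cdot 3 \cdot 7$, we have $|E| \in \{1,3,7,21\}$. Up to isomorphism $E \in \{1, C_3, C_7, C_7 \rtimes C_3\}$, where in the last case $E$ must be the Frobenius group (any subgroup of order $21$ in the simple group $\GL_3(\mathbb{F}_2)$ is the normaliser of a Sylow $7$-subgroup, hence non-abelian). By Schur--Zassenhaus, applied to $1 \to M \to L \to E \to 1$ with $\gcd(|M|,|E|)=1$, the extension splits, giving $L \cong D \rtimes E$ with $E$ acting faithfully by construction.

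The remaining step, which is the main obstacle, is to eliminate the central $\ell'$-extension in passing from $\tilde B$ to a block of $\mathcal{O}L$ itself. Each of the groups $E \in \{1, C_3, C_7, C_7 \rtimes C_3\}$ is a $Z$-group of squarefree order and hence has trivial Schur multiplier, so $H^2(E, \mathcal{O}^\times) = 1$. This vanishing lets us trivialise the $2$-cocycle recording how the central character of $\tilde B$ interacts with the $\ell'$-kernel of $\tilde L \to L$, yielding that $\tilde B$ is Morita equivalent to $\mathcal{O}L = \mathcal{O}(D \rtimes E)$. Composing with the equivalence $B \sim \tilde B$ from Proposition \ref{kp} proves the claim; the fact that $E$ has odd order is automatic from being an $\ell'$-group with $\ell = 2$, and the faithful action is built in from the definition of the inertial quotient.
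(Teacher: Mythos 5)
Your proposal is correct and follows essentially the same route as the paper, whose proof is exactly this sketch: apply Proposition \ref{kp} (after reducing, as you do via Fong--Reynolds, to the nilpotent block $b_D$ of $C_G(D)$) and kill the central $\ell'$-extension using that the possible inertial quotients $1$, $C_3$, $C_7$, $C_7\rtimes C_3$ all have trivial Schur multiplier. Your write-up merely makes explicit the steps (stability of $b_D$, nilpotency via the central defect group, Schur--Zassenhaus splitting of $L$) that the paper's two-line proof leaves implicit.
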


\begin{proof}
This is well known, but may be obtained for instance by applying Proposition \ref{kp} and noting that the inertial quotient is one of $1$, $C_3$, $C_7$ and $C_7 \rtimes C_3$, each having trivial Schur multiplier.
\end{proof}

\section{Preliminary results}

\begin{proposition}
\label{ree}
Let $N=$ $^2G_2(q)$, where $q=3^{2m+1}$ for some $m \in \NN \cup \{0\}$, and $N \leq G \leq \Aut (N)$. Let $b$ be the principal $2$-block of $\mathcal{O}N$. Then every block of $\mathcal{O}G$ covering $b$ is source algebra equivalent to $b$. Further, each of these blocks shares a defect group with $b$ and has isomorphic inertial quotient.
\end{proposition}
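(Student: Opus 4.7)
The plan is to combine Proposition \ref{innerautos} with an induction on $[G:N]$, and to verify $G[b]=G$ in the resulting prime-index step.

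Since $\Out({}^2G_2(q))$ is cyclic of odd order $2m+1$, the quotient $G/N$ is cyclic of odd order, in particular coprime to $\ell=2$. If $[G:N]>1$ I would pick an odd prime $w$ dividing $[G:N]$ and a normal subgroup $M\lhd G$ with $N\le M\le G$ and $[G:M]=w$, which exists because $G/N$ is cyclic. By induction the conclusion holds for the pair $(N,M)$, and since source algebra equivalence is transitive and preserves defect group and inertial quotient, this reduces the problem to $[G:N]=w$ with $w$ an odd prime distinct from $\ell$. Proposition \ref{innerautos} then gives a dichotomy: either every block of $\CO G$ covering $b$ is source algebra equivalent to $b$, or there is a unique block of $\CO G$ covering $b$. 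In the first alternative both added conclusions of the proposition are automatic, so it remains to exclude the second, equivalently to prove $G[b]=G$.

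To show $G[b]=G$, I would choose a representative $\sigma\in G$ for a generator of $G/N$ and exploit the fact that the fixed point subgroup $C_N(\sigma)$ is the smaller Ree group ${}^2G_2(3^{(2m+1)/w})$, whose Sylow $2$-subgroup again has order $8$. Because $\gcd(w,|D|)=1$, a standard coprime-action argument then lets us replace $\sigma$ by an $N$-conjugate that normalises (and in fact centralises) a chosen defect group $D$ of $b$. Consequently $\sigma$ stabilises the Brauer pair $(D,b_D)$, so it stabilises a defect pointed group on $b$ and induces an automorphism of the associated source algebra; showing that this automorphism is inner yields $G[b]=G$, after which the statements about shared defect group and isomorphic inertial quotient are immediate from Proposition \ref{innerautos}.

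The main obstacle is the last step, establishing that the induced automorphism of the source algebra is inner. When $w\in\{3,7\}$ the prime $w$ divides $|E|=|C_7\rtimes C_3|=21$, so $\sigma$ may act nontrivially on the inertial quotient and one cannot argue by coprimality alone. I would handle this either by appealing directly to the analysis of the principal $2$-blocks of Ree groups carried out in \cite{kkl12} (which is already cited for exactly this sort of purpose in Proposition \ref{innerautos}), or by a character-theoretic count: in the ``unique block'' alternative the value of $l(B)$ would be forced to equal $w\cdot l(b)$, which one can compare against the decomposition matrix data for $G$ extending $N$ to obtain a contradiction.
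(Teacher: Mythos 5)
Your reduction to prime index and the appeal to the dichotomy of Proposition \ref{innerautos} match the shape of the paper's argument, but the decisive step --- ruling out the ``unique covering block'' alternative, i.e.\ proving $G[b]=G$ --- is not actually carried out, and the mechanisms you sketch for it do not work as stated. Centralising (or normalising) a defect group via a coprime-action argument does not come close to showing that $\sigma$ acts as an inner automorphism on the block: in the closely analogous configuration $SL_2(8)\lhd{}^2G_2(3)$ the field automorphism of order $3$ normalises a Sylow $2$-subgroup, yet $G[b]=N$ and there is a unique covering block (it permutes two triples of ordinary characters, and the covering block again has exactly $8$ characters). So ``$\sigma$ stabilises $(D,b_D)$, hence induces an automorphism of the source algebra, which we then show is inner'' simply restates the goal without providing a tool. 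Your fallback count is also flawed: the claim that a unique covering block would force $l(B)=w\cdot l(b)$ presupposes that \emph{all} irreducible Brauer characters of $b$ are $G$-stable, which is precisely the kind of stability statement that has to be proved (and which fails for $SL_2(8)\lhd{}^2G_2(3)$, where orbits of length $3$ occur and no contradiction arises); moreover you never pin down $l(b)$ or the bound you would contradict.

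What the paper uses, and what is missing from your proposal, is the specific character-theoretic input from Ward's paper on the Ree groups \cite{wa66}: in the principal $2$-block of ${}^2G_2(q)$ each character degree occurs with multiplicity at most two, so since $G/N$ is cyclic of odd order every one of the $8$ ordinary irreducible characters of $b$ is $G$-stable. Hence at each prime-index step there are exactly $8\cdot[G_{i+1}:N]$ ordinary characters of $G_{i+1}$ lying over $b$, while by \cite{kkl12} every covering block (having defect group of order $8$, as the index is odd) has exactly $8$ ordinary characters; counting then shows there are $[G_{i+1}:N]>1$ covering blocks, so the ``unique block'' branch of Proposition \ref{innerautos} is excluded and the other branch gives the source algebra equivalence and the inertial quotient statement. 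Note also that this counting simultaneously settles a point your induction glosses over, namely the $G$-stability of the intermediate blocks of $\mathcal{O}G_i$ covering $b$ to which Proposition \ref{innerautos} is applied at the next step. Without Ward's multiplicity statement (or some equally concrete stability input), your argument does not close.
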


\begin{proof}
$G/N$ is cyclic of odd order. Let $N=G_0 \leq G_1 \leq \cdots \leq G_n=G$, with each $|G_{i+1}/G_i|$ prime. By~\cite{wa66} $b$ has defect groups of the form $C_2 \times C_2 \times C_2$ and irreducible character degrees occurring with multiplicity either one or two, so that each irreducible character is $G$-stable. Since $[G:N]$ is odd each block of $\mathcal{O}G_i$ covering $b$ shares a defect group with $b$. By~\cite{kkl12}, every block with defect group $C_2 \times C_2 \times C_2$ (in particular $b$ and every block of $\mathcal{O}G_i$ covering it) has precisely eight irreducible characters, and it follows that for each $i$ there are $[G_{i+1}:N]$ $2$-blocks of $\mathcal{O}G_{i+1}$ covering $b$, and amongst these there $[G_{i+1}:G_i]$ blocks of $\mathcal{O}G_{i+1}$ covering each such block of $\mathcal{O}G_i$. It follows from Proposition \ref{innerautos} that each block of $\mathcal{O}G_i$ covering $b$ is source algebra equivalent to $b$. That the blocks have isomorphic inertial quotient follows from~\cite[3.4]{kkl12}.
\end{proof}

\begin{proposition}
\label{typeC3}
Let $G$ be a finite group and $N \lhd G$ with $[G:N]$ an odd prime. Let $b$ be a $G$-stable block of $\mathcal{O}N$ with defect group $C_2 \times C_2 \times C_2$ and inertial quotient $C_3$. Suppose that $l(b)=3$. Let $B$ be a block of $\mathcal{O}G$ covering $b$. Then either $B$ is source algebra equivalent to $b$ or nilpotent. In the former case $B$ has inertial quotient $C_3$ and $[G:N]=3$.
\end{proposition}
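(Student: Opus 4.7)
The plan is to apply Proposition~\ref{innerautos} with $\ell = 2$ and $w = [G:N]$, which yields the dichotomy that either every block of $\mathcal{O}G$ covering $b$ is source algebra equivalent to $b$ (and shares its inertial quotient $C_3$), or $B$ is the unique block of $\mathcal{O}G$ covering $b$. The first alternative gives source algebra equivalence and inertial quotient $C_3$ immediately; the equality $[G:N] = 3$ would be deduced by examining how $G/N \cong C_p$ acts on the three simple $b$-modules (equivalently, the three irreducible Brauer characters) through the automorphism/Picard group of $b$. Since $b$ is Morita equivalent to $\mathcal{O}(C_2 \times A_4)$ by Lemma~\ref{normaldefect} applied to its Brauer correspondent in $N_N(D)$, the odd part of the relevant automorphism group is the cyclic group of order $3$ (generated by twisting by a linear character of order $3$), forcing $p = 3$.

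In the unique-block alternative, I would show $B$ is nilpotent. Because $H^2(C_p, k^\times)$ vanishes for cyclic $p$-groups acting trivially on the divisible group $k^\times$, any $G$-stable simple $b$-module extends to $\mathcal{O}G$. Hence the uniqueness of the covering block forces the $C_p$-action on the three simple $b$-modules to be nontrivial. A prime-order permutation of three elements without fixed points must be a $3$-cycle, so $p = 3$ and the three simple $b$-modules form a single $G$-orbit. Clifford theory for blocks then yields $l(B) = 1$, and by the verification of Alperin's weight conjecture for blocks of defect $C_2 \times C_2 \times C_2$ in~\cite{kkl12}, $l(B) = 1$ forces the inertial quotient of $B$ to be trivial, so $B$ is nilpotent.

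The main obstacle lies in the delicate Clifford-theoretic and cohomological bookkeeping needed to connect the $G/N$-action on $b$ to the action on its simple modules, and to pin down $p = 3$ in each case—via Picard-group analysis in the source-algebra-equivalent case, and via $H^2$-vanishing in the unique-block case. Once these actions are controlled, the combination of Proposition~\ref{innerautos} with the Alperin-weight input from~\cite{kkl12} carries the rest of the argument almost formally.
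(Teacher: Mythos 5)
Your proposal goes wrong in the source-algebra-equivalent branch. You try to deduce $[G:N]=3$ there via the Picard/automorphism group of $b$, and this fails twice over. First, Lemma~\ref{normaldefect} concerns blocks with \emph{normal} defect group, so it describes the Brauer correspondent of $b$ in $N_N(D)$, not $b$ itself; the hypotheses ($l(b)=3$, inertial quotient $C_3$) do not make $b$ Morita equivalent to $\mathcal{O}(C_2\times A_4)$ — it could just as well lie in the class of the principal block of $\mathcal{O}(C_2\times A_5)$, and a priori one knows even less. Second, even with complete knowledge of the outer automorphism group of $b$, the image of $G/N$ in it may be trivial, so nothing forces $p=3$: for instance $G=N\times C_5$ with $b$ a suitable $G$-stable block of $\mathcal{O}N$ gives covering blocks source algebra equivalent to $b$ with $[G:N]=5$. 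The paper's own proof does not (and cannot) establish $[G:N]=3$ in this branch: it splits into $[G:N]\geq 5$ and $[G:N]=3$, and in the first case concludes source algebra equivalence; the "$[G:N]=3$" clause is really tied to the nilpotent alternative, which can only occur when the three Brauer characters of $b$ are permuted transitively, hence only when $[G:N]=3$. So this part of your argument should be discarded rather than repaired.

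The second gap is the step "the uniqueness of the covering block forces the $C_p$-action on the three simple $b$-modules to be nontrivial": this is asserted, not proved — stability of all simple $b$-modules does not by itself contradict uniqueness of the covering block. The paper closes exactly this point by counting against the bound from~\cite{kkl12}: every block with defect group $C_2\times C_2\times C_2$ has $k(B)=8$, hence $l(B)\leq 7$; if all three Brauer characters of $b$ were $G$-stable they would extend (as you note, since $G/N$ is cyclic the obstruction in $H^2(G/N,k^\times)$ vanishes), and twisting by the $[G:N]$ linear characters of $G/N$ yields at least $3[G:N]\geq 9$ irreducible Brauer characters lying over $b$, which a unique covering block cannot accommodate. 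This same count is what disposes of the case $[G:N]\geq 5$ (where stability is automatic, giving at least $15$), and it is the ingredient your sketch is missing — you invoke~\cite{kkl12} only later, for $l(B)=1\Rightarrow B$ nilpotent. Once that count is inserted, your remaining steps (a fixed-point-free odd-prime-order permutation of three objects is a $3$-cycle, so $[G:N]=3$; Clifford theory gives $l(B)=1$; then~\cite{kkl12} gives nilpotency) coincide with the paper's argument, as does your use of Proposition~\ref{innerautos} for the basic dichotomy.
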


\begin{proof}
By~\cite{kkl12} we have $l(B) \leq 7$. Suppose first that $[G:N] \geq 5$. Since we are assuming that $l(b)=3$, there cannot be a unique block of $\mathcal{O}G$ covering $b$ (since each irreducible Brauer character of $b$ is $G$-stable and so the total number of irreducible Brauer characters in blocks covering $B$ is at least $15$), so by Proposition \ref{innerautos} $B$ is source algebra equivalent to $b$ and has the same inertial quotient.

Suppose now that $[G:N]=3$. If every irreducible Brauer character of $b$ is $G$-stable, in which case again by Proposition \ref{innerautos} $B$ is source algebra equivalent to $b$ and has the same inertial quotient. If the three irreducible Brauer characters are permuted transitively, then $l(B)=1$, so that by~\cite{kkl12} $B$ is nilpotent.
\end{proof}

The following is a strengthening of a special case of the main result of~\cite{kk96}, which is only known to hold for blocks defined over $k$.

\begin{proposition}
\label{kkext}
Let $G$ be a finite group and $N \lhd G$ and let $C$ be a $G$-stable block of $\mathcal{O}N$ covered by a block $B$ of $\mathcal{O}G$ with elementary abelian defect group $D$ of order $8$. Write $P=N \cap D$ and suppose that $D=P \times Q$ for some $Q$ of order $2$ such that $G=N \rtimes Q$. Then $B \cong C \otimes_\mathcal{O} \mathcal{O}Q$. In particular $B$ and the block $C \otimes_\mathcal{O} \mathcal{O}Q$ of $\mathcal{O}(N \times Q)$ are Morita equivalent.
\end{proposition}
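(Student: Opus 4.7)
The plan is to realise the sum of blocks of $\mathcal{O}G$ covering $C$ as a skew group algebra and then split that skew group algebra as a tensor product. Since $C$ is $G$-stable, its block idempotent $e_C$ is central in $\mathcal{O}G$; I would set $A := \mathcal{O}G\,e_C$. Using $G=N\rtimes Q$ one obtains the $\mathcal{O}$-module decomposition $A=\bigoplus_{\tau\in Q}C\tau$, and because $Q$ is a genuine subgroup of $G$ (not merely a quotient) this makes $A$ into the skew group algebra $C\rtimes_\sigma Q$, with $\sigma:Q\to\Aut(C)$ the conjugation action. If one can exhibit $u\in C^\times$ with $\sigma(\tau)=\mathrm{Inn}(u)$ (where $\tau$ generates $Q$) and $u^2=1$, then $u^{-1}\tau$ is a central involution of $A$, and $c\otimes\tau^i\mapsto c(u^{-1}\tau)^i$ will define an $\mathcal{O}$-algebra isomorphism $C\otimes_\mathcal{O}\mathcal{O}Q\xrightarrow{\sim}A$. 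Since $\mathcal{O}Q$ is local ($Q$ is an $\ell$-group), $C\otimes_\mathcal{O}\mathcal{O}Q$ is an indecomposable block algebra, so $A=B$, and the Morita claim is then immediate from the identification of $C\otimes_\mathcal{O}\mathcal{O}Q$ as a block of $\mathcal{O}(N\times Q)$.

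The heart of the proof is producing such a $u$, and this is where the hypotheses have to be put to work. Since $D=P\times Q$ is abelian, $\tau$ centralises $P=D\cap N$, so $\tau\in C_G(P)$. Since $P$ is a defect group of $C$, I would fix a source idempotent $i$ of $C$ at a maximal $C$-Brauer pair $(P,e_P)$. The element $\tau$ fixes $P$ pointwise and stabilises $C$, so it permutes the source idempotents at $(P,e_P)$; by Puig's transitivity theorem these are conjugate under $N_N(P,e_P)$, and after replacing $\tau$ by $\tau n$ for a suitable $n\in N_N(P,e_P)\subseteq N$ one may assume $\tau$ fixes $i$. The resulting $\tau$ then acts on the source algebra $S=iCi$ as a $P$-interior algebra automorphism, which by the standard source-algebra Skolem--Noether argument is implemented by a unit of $S$; this yields $u\in(iCi)^\times\subseteq C^\times$ with $\sigma(\tau)=\mathrm{Inn}(u)$.

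To arrange $u^2=1$, note that $\sigma^2=\mathrm{id}$ forces $u^2\in Z(C)^\times$, and replacing $u$ by $u\zeta$ with $\zeta\in Z(C)^\times$ satisfying $\zeta^2=u^{-2}$ gives a new $u$ with $u^2=1$. Over $k$ the existence of this square root is essentially the content of \cite{kk96}; to lift to $\mathcal{O}$ I would use that the kernel of the reduction $Z(C)^\times\twoheadrightarrow Z(kC)^\times$ is the pro-$\ell$ subgroup $1+J(Z(C))$, inside which a Hensel-style adjustment removes the obstruction one $J$-level at a time. The main obstacle is accordingly twofold: the inner-ness of $\sigma$, which is standard Puig source-algebra technology but requires careful bookkeeping of how $\tau$ acts on Brauer pairs, and the $\mathcal{O}$-lift of the cocycle splitting from \cite{kk96}, which is the genuinely new ingredient relative to the $k$-theoretic original.
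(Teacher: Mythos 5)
Your reduction of the problem to producing a unit $u\in C^\times$ implementing the action of $\tau$ with $u^2=1$ (equivalently, a degree-$x$ central graded unit of order two) is the same reduction the paper makes, but both of the steps you propose for producing $u$ have genuine gaps. First, there is no Skolem--Noether theorem for source algebras: a $P$-interior algebra automorphism of a source algebra need not be inner. Already for the principal block of $kA_4$ --- and note that here $|P|=4$, so $C$ is exactly a Klein four block of this kind --- twisting by a nontrivial linear character gives an automorphism that fixes the image of $P$ pointwise and fixes a source idempotent, yet permutes the three one-dimensional simple modules and hence is not inner. So the innerness of the action of $\tau$ on $C$ is part of what has to be proved, not ``standard source-algebra technology''; one could try to salvage it by showing the relevant outer automorphism group has odd order via the structure theory of Klein four blocks \cite{li94,CEKL}, but that is a substantive extra input. (Two smaller slips in the same step: replacing $\tau$ by $\tau n$ with $n\in N$ destroys $\tau^2=1$, which your later computation uses; and source idempotents attached to a fixed maximal Brauer pair are conjugate under $(C^P)^\times$, not under $N_N(P,e_P)$.)

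Second, and more seriously, the adjustment forcing $u^2=1$ fails: here $\ell=2=|Q|$, so $X^2-c$ is inseparable modulo the maximal ideal and Hensel's lemma gives nothing, and even over $k$ the squaring map on $1+J(Z(C))$ is far from surjective (in $k[t]/(t^2)$ in characteristic $2$ the only square in $1+(t)$ is $1$). The existence of the central square root you need is precisely the obstruction class in $H^2(Q,Z(C)^\times)$ whose vanishing must be proved; this is why the $k$-version in \cite{kk96} is a genuine theorem and why its $\mathcal{O}$-analogue was open --- the point of this proposition. The paper circumvents both difficulties by a different route: it takes the perfect isometry between $\mathcal{O}(D\rtimes E)$ and $B$ coming from \cite[5.1]{kkl12}, shows that on generalised characters vanishing on $2$-regular elements it must match the differences $\theta_i-\theta_{i-4}$ with $\pm(\chi_j-\chi_{j-4})$, and then applies Brou\'e's induced central algebra isomorphism $I^0:Z(\mathcal{O}H)\rightarrow Z(B)$ \cite{br90} to transport the central involution $x$ to $I^0(x)=ax$ with $a\in\mathcal{O}N$, which is exactly the required graded central unit of degree $x$ and order two. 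Your proposal contains no substitute for this step.
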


\begin{proof}
Write $Q=\langle x \rangle$. As noted in~\cite{kk96} $B$ and $C$ share a block idempotent $e$, so that $B$ is a crossed product of $C$ with $Q$ and it suffices to find a graded unit of $Z(B)$ of degree $x$ and order two. We do this by exploiting the existence of a perfect isometry as shown in~\cite[5.1]{kkl12}, although we must show that this perfect isometry satisfies additional properties. Part of the proof follows that of~\cite[5.1]{kkl12}, and we take facts from there without explicit further reference. Note however that for convenience we use a different labeling of the irreducible characters.

Denote by $E$ the inertial quotient of $B$, so that $|E|=1$ or $3$. If $|E|=1$, then $B$ is nilpotent and the result follows from~\cite{puig88}. Hence we may assume that $|E|=3$ and $E$ acts faithfully on $D$. Write $H=D \rtimes E$. Then $Q \leq Z(H)$ and so $H=(P \rtimes E) \times Q$.

By~\cite{la81} we have $k(B)=8$. Label the irreducible characters $\theta_i$ of $H$ so that $\theta_1,\ldots,\theta_4$ have $Q$ in their kernel, $\theta_1(1)=\theta_2(1)=\theta_3(1)=1$, $\theta_4(1)=3$ and $\theta_i (g)=\theta_{i-4}(g)$ for all $i=5,\ldots,8$ and all $g \in P \rtimes E$. We have $\theta_i (x)=-\theta_{i}(1)$ for $i=5,\ldots,8$. Similarly label the irreducible characters $\chi_1, \ldots, \chi_8$ of $B$ so that $\Res_N^G(\chi_i)=\Res_N^G(\chi_{i-4})$ for all $i=5,\ldots,8$. Note that $\chi_i (x)=-\chi_{i-4}(x)$ for all $i=5,\ldots,8$.

There is a stable equivalence of Morita type between $\mathcal{O}H$ and $B$, leading to an isometry $L^0(H,\mathcal{O}H) \cong L^0(G,B)$ between the groups of generalised characters vanishing on $2$-regular elements. $L^0(H, \mathcal{O}H)$ is generated by $$\{\theta_1-\theta_5, \theta_2-\theta_6, \theta_3-\theta_7, \theta_4-\theta_8, \theta_1+\theta_2+\theta_3-\theta_4\}.$$ We claim that if $\chi_i-\chi_j \in L^0(G,B)$, then $|i-j|=4$. For suppose that $\chi_i(g)=\chi_j(g)$ for all $g \in G$ of odd order. Then $\Res_N^G(\chi_i)$ and $\Res_N^G(\chi_2)$ are irreducible characters of $C$ agreeing on $2$-regular elements. Noting that $C$ is not nilpotent, and that $C$ has decomposition matrix that of the principal $2$-block of $A_4$ or $A_5$, it follows that $\Res_N^G(\chi_i)=\Res_N^G(\chi_2)$ and the claim follows.

Hence the isometry takes elements of the form $\theta_i-\theta_{i-4}$ to elements of the form $\delta_j(\chi_j-\chi_{j-4})$. Now the isometry extends to a perfect isometry $I:\ZZ \Irr(H) \rightarrow \ZZ \Irr(B)$, and we have seen that $I(\theta_i)(g)=I(\theta_{i-4})(g)$ for every $i=5,\ldots,8$ and every $g \in N$.

Following~\cite{br90} $I$ induces an $\mathcal{O}$-algebra isomorphism $I^0:Z(\mathcal{O}H)\rightarrow Z(B)$ with $I^0(x)= \frac{1}{|H|} \sum_{g \in G} \mu (g^{-1},x)g$, where $\mu (g,h)= \sum_{i=1}^8 \theta_i (h) I(\theta_i)(g)$ for $g \in G$ and $h \in H$. We will show that $I^0(x) = ax$ for some $a \in \mathcal{O}N$, i.e., that $\mu (g,x)=0$ whenever $g \in N$. Then $I^0(x)$ will be the required graded unit of $Z(B)$ of degree $x$ and order two.

Let $g \in N$. Then $$\mu(g,x) =  \sum_{i=1}^8 \theta_i (x) I(\theta_i)(g) = \sum_{i=5}^8 \theta_{i-4}(1)\left( I(\theta_{i-4})(g) - I(\theta_{i})(g) \right) = 0$$ and we are done.
\end{proof}


\section{Proof of the main theorem}

We prove Theorem \ref{maintheorem}.

\begin{proof}
Let $B$ be a block of $\mathcal{O}G$ for a finite group $G$ with defect group $D \cong C_2 \times C_2 \times C_2$ with $[G:Z(G)]$ minimised such that $B$ is not Morita equivalent to any of (i)-(viii). By minimality and the first Fong reduction $B$ is quasiprimitive, that is, for every $N \lhd G$ each block of $\mathcal{O}N$ covered by $B$ is $G$-stable. By Proposition \ref{kp} if $N \lhd G$ and $B$ covers a nilpotent block of $\mathcal{O}N$, then $N \leq Z(G) O_2(G)$. In particular $O_{2'}(G) \leq Z(G)$

Following~\cite{as00} write $E(G)$ for the \emph{layer} of $G$, that is, the central product of the subnormal quasisimple subgroups of $G$ (the \emph{components}). Write $F(G)$ for the Fitting subgroup, which in our case is $F(G)=Z(G)O_2(G)$. Write $F^*(G)=F(G)E(G) \lhd G$, the generalised Fitting subgroup, and note that $C_G(F^*(G)) \leq F^*(G)$. Let $b$ be the (unique) block of $\mathcal{O}F^*(G)$ covered by $B$.

Let $\overline{B}$ be the unique block of $\mathcal{O}(G/O_2(Z(G)))$ corresponding to $B$. First observe that $|O_2(Z(G))| \leq 2$, for otherwise $\overline{B}$ would have defect at most one and so would be nilpotent, which in turn would mean that $B$ would be nilpotent by Proposition \ref{watanabe}, a contradiction.

If $|O_2(G)| > 4$, then $O_2(G)=D$, a contradiction by Lemma \ref{normaldefect}. Hence $|O_2(G)| \leq 4$.

\emph{Claim.} $O_2(G) \leq Z(G)$ and $|O_2(G)| \leq 2$.

Suppose that $O_2(G) \not\leq Z(G)$ (so $|O_2(G)|=4$). If $O_2(Z(G)) \neq 1$, then $O_2(G/O_2(Z(G)))$ has order $2$ and so is central in $G/O_2(Z(G))$, from which it follows using Proposition \ref{watanabe} that $\overline{B}$, and so $B$, is nilpotent, again a contradiction. If $O_2(Z(G)) = 1$, then $F^*(G) = O_2(G) \times (Z(G)E(G))$. Since $|O_2(G)|=4$, $B$ covers a nilpotent block of $F^*(G)$ and so $F^*(G)=O_2(G)Z(G)$. But $C_G(F^*(G)) \leq F^*(G)$ and so $D \leq C_G(O_2(G)) \leq O_2(G)Z(G)$, a contradiction. Hence $O_2(G) \leq Z(G)$ (and $|O_2(G)| \leq 2$) as claimed.

Write $E(G)=L_1 *\cdots *L_t$, where each $L_i$ is a component of $G$ (arguing as above we have that $t \geq 1$). Now $B$ covers a block $b_E$ of $\mathcal{O}E(G)$ with defect group contained in $D$, and $b_E$ covers a block $b_i$ of $\mathcal{O}L_i$. Since $b_E$ is $G$-stable, for each $i$ either $L_i \lhd G$ or $L_i$ is in a $G$-orbit in which each corresponding $b_i$ is isomorphic (with equal defect). Since $B$ has defect three, it follows that if $t>1$, then $B$ covers a nilpotent block of a normal subgroup generated by components of $G$, a contradiction. Hence $t=1$. So $G$ has a unique component $L_1$, and $G/Z(G) \leq \Aut(L_1Z(G)/Z(G))$.

Suppose that $O_2(G) \not\leq [L_1,L_1]$. Then $F^*(G)=O_2(G) \times Z(G)L_1$. In this case $D \leq F^*(G)$, since otherwise $b$ would be nilpotent. Since $b$ is $G$-stable, this means $[G:F^*(G)]$ odd and so $O_2(G)$ is in fact a direct factor of $G$. By~\cite{li94} it follows that $B$ is Morita equivalent to one of (ii) or (iii), a contradiction. Hence $O_2(G) \leq [L_1,L_1]$.

We next show that $D \leq F^*(G)$. Suppose otherwise. Then since $D$ is elementary abelian we may write $D=(D \cap F^*(G)) \times Q$ for some $Q$ of order $2$ (if $Q$ were to be larger, then $B$ would cover a nilpotent block of $\mathcal{O}F^*(G)$). By the Schreier conjecture $G/F^*(G)$ is solvable. Since $b$ is $G$-stable, $DF^*(G)/F^*(G)$ is a Sylow $2$-subgroup of $G/F^*(G)$. Hence $G=H \rtimes Q$ for some $H \lhd G$. By Proposition \ref{kkext} $B \cong b \otimes_\mathcal{O} \mathcal{O}Q$ as $\mathcal{O}$-algebras. Now $b \otimes_\mathcal{O} \mathcal{O} Q$ is a block of $\mathcal{O}(H \times Q)$ with defect group $D=(D \cap H) \times Q$. Since $b$ is Morita equivalent to the principal block of $\mathcal{O}A_4$ or $\mathcal{O}A_5$, it follows that $B$ is Morita equivalent to one of (ii) or (iii). Hence $D \leq F^*(G)$. Since $[F^*(G):L_1]$ is odd, this means $D$ is also a defect group for $b_1$.

We now refer to Proposition \ref{classification}. Suppose that $L_1 \cong SL_2(8)$ and $b_1$ is the principal block. Then $G$ is $SL_2(8)$ or $\Aut(SL_2(8)) \cong SL_2(8) \rtimes C_3 \cong $ $^2G_2(3)$, leading to (v) or (viii) of the theorem.

If $L_1 \cong $ $^2G_2(3^{2m+1})$ for some $m \in \NN$, then $L_1 \leq G \leq \Aut(^2G_2(3^{2m+1}))$ and by Proposition \ref{ree} $B$ is Morita equivalent to $b_1$. By~\cite[Example 3.3]{ok97} $b_1$ is Morita equivalent to the principal block of $\mathcal{O}(^2G_2(3))$.

If $L_1 \cong J_1$ or $Co_3$, then $G=L_1$. By~\cite[1.5]{kmn11} the $2$-block of $\mathcal{O}Co_3$ of defect three is Morita equivalent to the principal block of $\mathcal{O}(^2G_2(3))$, hence we are done in this case. The principal block of $\mathcal{O}J_1$ is not Morita equivalent to that of $\mathcal{O}(^2G_2(3))$, since their decomposition matrices are not similar (see~\cite{gap} for decomposition matrices).

Suppose that $L_1$ is of type $D_n(q)$ or $E_7(q)$ and $b_1$ is Morita equivalent to the principal block of $\mathcal{O}(C_2 \times A_4)$ or $\mathcal{O}(C_2 \times A_5)$. Then $G/L_1$ is abelian and of odd order. By Proposition \ref{typeC3} $B$ is either nilpotent (a contradiction) or Morita equivalent to $b_1$, and we are done in this case.

This leaves the case that $|O_2(L_1)|=2$ and $D/O_2(L_1)$ is a Klein four group. We have shown that $O_2(N)=O_2(G)$. Recall that $\overline{B}$ is the unique block of $\mathcal{O}(G/O_2(G))$ corresponding $B$, and note that $\overline{B}$ has defect group $D/O_2(G)$. By\cite{CEKL} $\overline{B}$ is source algebra equivalent to the principal block of $\mathcal{O}A_4$ or of $\mathcal{O}A_5$. It follows from~\cite[Corollary 1.14]{pu01} that $B$ is Morita equivalent to the principal block of a central extension of $A_4$ or $A_5$ by a group of order $2$, i.e., of $C_2 \times A_4$ or $C_2 \times A_5$.

To see that the blocks in cases (i)-(viii) represent distinct Morita equivalence classes it suffices to note that they have distinct decomposition matrices.

\end{proof}

\begin{center} ACKNOWLEDGEMENTS \end{center}

I thank Markus Linckelmann for some useful correspondence on the subject of lifting Morita equivalences.

\vspace{10mm}
Charles Eaton

School of Mathematics

University of Manchester

Oxford Road

Manchester

M13 9PL

United Kingdom

charles.eaton@manchester.ac.uk
\end{document}